\theoremstyle{plain}
\newtheorem{theorem}{Theorem}
\newtheorem{question}[theorem]{Question}
\theoremstyle{definition}
\begin{document}

\title{Extension of a theorem of Duffin and Schaeffer}

\author{Michael Coons}
\address{School of Math.~and Phys.~Sciences\\
University of Newcastle\\
Callaghan\\
Australia}
\email{Michael.Coons@newcastle.edu.au}

\date{\today}

%%%%%%%%%%%%%%%%%%%%%%%%%%%%%%%%%%%%%%%%%%%%%%%%
\begin{abstract} Let $r_1,\ldots,r_s:\mathbb{Z}_{n\geqslant 0}\to\mathbb{C}$ be linearly recurrent sequences whose associated eigenvalues have arguments in $\pi\mathbb{Q}$ and let $F(z):=\sum_{n\geqslant 0}f(n)z^n$, where $f(n)\in\{r_1(n),\ldots,$ $r_s(n)\}$ for each $n\geqslant 0$. We prove that if $F(z)$ is bounded in a sector of its disk of convergence, it is a rational function. This extends a very recent result of Tang and Wang, who gave the analogous result when the sequence $f(n)$ takes on values of finitely many polynomials.
\end{abstract}
%%%%%%%%%%%%%%%%%%%%%%%%%%%%%%%%%%%%%%%%%%%%%%%%

\maketitle

%%%%%%%%%%%%%%%%%%%%%%%%%%%%%%%%%%%%%%%%%%%%%%%%
\section{Introduction}
%%%%%%%%%%%%%%%%%%%%%%%%%%%%%%%%%%%%%%%%%%%%%%%%

In 1945, Duffin and Schaeffer \cite{DS1945} proved that {\em if a power series whose coefficients are taken from a finite set is bounded in a sector of the unit disk, then it is already a rational function.} In this short note, we prove the following extension.

\begin{theorem}\label{main} Let $r_1,\ldots,r_s:\mathbb{Z}_{n\geqslant 0}\to\mathbb{C}$ be linearly recurrent sequences whose associated eigenvalues have arguments in $\pi\mathbb{Q}$ and let $F(z):=\sum_{n\geqslant 0}f(n)z^n$, where $f(n)\in\{r_1(n),\ldots,$ $r_s(n)\}$ for each $n\geqslant 0$. If $F(z)$ is bounded in a sector of its disk of convergence, then it is a rational function.
\end{theorem}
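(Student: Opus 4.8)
The plan is to reduce to the original Duffin--Schaeffer theorem by showing that, after multiplying $F(z)$ by a suitable polynomial and splitting the index set into arithmetic progressions, the coefficient sequence takes values in a finite set. The starting observation is that each linearly recurrent sequence $r_i(n)$ has a closed form $r_i(n) = \sum_j p_{ij}(n)\,\lambda_{ij}^n$, where the $p_{ij}$ are polynomials and the $\lambda_{ij}$ are the eigenvalues, all with arguments in $\pi\mathbb{Q}$. Writing $\lambda_{ij} = |\lambda_{ij}| e^{i\pi\theta_{ij}}$ with $\theta_{ij}\in\mathbb{Q}$, choose a common denominator $N$ for all the $\theta_{ij}$ across all $i,j$. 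Then along each residue class $n \equiv a \pmod{2N}$, every $\lambda_{ij}^n$ equals $\zeta_{ij}^{a}\,|\lambda_{ij}|^n$ for a fixed root of unity $\zeta_{ij}^a$; in other words, on this progression each $r_i$ agrees with a sequence of the form $\sum_j q_{ij}(n)\, \rho_{ij}^{\,n}$ with $\rho_{ij} = |\lambda_{ij}|^{2N} > 0$ real and $q_{ij}$ polynomials.

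Next I would handle the disk of convergence and the real-base sequences. Since $F$ has a positive radius of convergence, the relevant eigenvalues all satisfy $|\lambda_{ij}| \leqslant R^{-1}$ where $R$ is the radius; the terms with $|\lambda_{ij}| < R^{-1}$ contribute a power series with a strictly larger radius of convergence, hence an entire-on-a-neighbourhood (in fact rational-modulo-analytic) correction that is harmless, so after subtracting a function analytic on a disk slightly larger than the disk of convergence we may assume every surviving $\rho_{ij}$ equals $R^{-2N}$, i.e. all eigenvalues on the critical circle. Substituting $z \mapsto$ (appropriate rescaling) and passing to the variable $w = z^{2N}$ on each progression, the tail of $F$ becomes, on each progression, a power series in $w$ whose coefficients are polynomials in $n$ evaluated at the integers, with a single common base. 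Now apply the Tang--Wang result (the polynomial case quoted in the abstract) on each of the finitely many progressions: boundedness of $F$ in a sector of its disk of convergence transfers to boundedness in a sector for each of these sub-series (a sector in $z$ maps to a sector in $w$, and the progression-restriction is a finite linear combination of rotated copies of $F$, each bounded in a rotated sector), so each sub-series is rational; assembling the finitely many rational pieces and adding back the analytic correction — which, being a bounded analytic continuation past the circle, must itself be rational by the classical Duffin--Schaeffer / Carlson-type argument — yields that $F$ is rational.

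The main obstacle, and the step requiring the most care, is the reduction that lets one invoke Tang--Wang: one must check that restricting $F$ to an arithmetic progression of indices and boundedness in a sector are compatible. Concretely, the operator $f(n) \mapsto f(n)\mathbf{1}_{n\equiv a}$ is realised analytically by $F(z) \mapsto \frac{1}{2N}\sum_{k=0}^{2N-1} \omega^{-ka} F(\omega^k z)$ with $\omega = e^{2\pi i/2N}$, a finite average of rotates of $F$; if $F$ is bounded on a sector $S$ of angular width $\delta$, then this average is bounded on $S \cap \bigcap_k \omega^{-k} S$, which is still a genuine (possibly narrower, but nonempty once we shrink $\delta$) sector — here one may first note that boundedness on some sector implies boundedness on sectors of arbitrarily small width with the same bisector, or simply subdivide. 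A second delicate point is that Tang--Wang requires the coefficients to be exactly values of finitely many polynomials with a common... ; here the polynomials $q_{ij}$ depend on the progression but on each fixed progression they are genuine polynomials in $n$ (or in $m$ where $n = 2Nm + a$), and the base is a single positive real, so after the change of variables $w = z^{2N}$ the hypothesis of Tang--Wang is met verbatim. Everything else — the closed form for linear recurrences, the harmlessness of sub-critical eigenvalues, the finiteness of the set of progressions — is routine.
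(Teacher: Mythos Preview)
Your overall strategy---write each $r_i$ in closed form, peel off the subcritical eigenvalues, and reduce the critical part to Tang--Wang---matches the paper's, but the execution has a real gap at the arithmetic-progression step. The averaging formula $F_a(z)=\frac{1}{2N}\sum_k \omega^{-ka}F(\omega^k z)$ is bounded only on the intersection $\bigcap_k \omega^{-k}S$, and for a sector $S$ of small angular width this intersection is \emph{empty}: once $\delta<\pi/N$ the rotates $\omega^{-k}S$ are pairwise disjoint. Shrinking $\delta$ makes this worse, not better, so your proposed fix goes the wrong way. Concretely, $F(z)=1/(1+z)$ is bounded in a sector about $z=1$, yet its even-index subseries $1/(1-z^2)$ is unbounded there; progression-restriction simply does not preserve sector-boundedness. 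The remedy is to drop the splitting altogether: after subtracting the subcritical part and rescaling so the critical modulus is $1$, the $n$th coefficient equals $\sum_j p_{i_n j}(n)\,e^{i\pi\theta_{i_n j}n}$, and because each $\theta_{ij}\in\mathbb{Q}$ the phase $e^{i\pi\theta_{ij}n}$ depends only on $n$ modulo a fixed period. Hence that coefficient is $P(n)$ for some polynomial $P$ drawn (according to $i_n$ and $n$ mod the period) from one finite list, and Tang--Wang applies directly to the whole series---no averaging needed. This is exactly what the paper does.

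There is a second, smaller gap: the subcritical correction is not ``rational by a Duffin--Schaeffer/Carlson-type argument''---analyticity on a larger disk certainly does not force rationality. What is true is that the subcritical part again has $n$th coefficient in $\{r_1'(n),\dots,r_s'(n)\}$ for linear recurrences $r_i'$ with strictly fewer distinct eigenvalue moduli, and it is bounded on the entire unit disk since its radius of convergence exceeds $1$; so the theorem itself applies to it by induction on the number $D$ of moduli. The paper organises this as a minimal-counterexample argument on $D$, which you should adopt in place of the Carlson appeal.
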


Our result, follows from a very recent result of Tang and Wang \cite[Theorem~1.3]{TW2017}, who used a method developed by Borwein, Erd\'elyi, and Littmann \cite{BEL2008} to show that {\em if $p_1(z),\ldots,p_s(z)\in\mathbb{C}[z]$ are polynomials and $F(z):=\sum_{n\geqslant 0}f(n)z^n$, where $f(n)\in\{p_1(n),\ldots,p_s(n)\}$ for each $n\geqslant 0$, is bounded in a sector of the unit disk, then it is a rational function.}

\section{Proof of Theorem \ref{main}}

\begin{proof}[Proof of Theorem \ref{main}] Let $r_1,\ldots,r_s:\mathbb{Z}_{n\geqslant 0}\to\mathbb{C}$ be linearly recurrent sequences whose associated eigenvalues have arguments in $\pi\mathbb{Q}$ and $f(n)\in\{r_1(n),\ldots,r_s(n)\}$ for each $n\geqslant 0$. Then for each $i=1,\ldots,s$, there exist an integer $\ell$, $\ell\times 1$ vectors ${\bf u}_i$ and ${\bf v}_i$ with complex entries, and an $\ell\times\ell$ matrix ${\bf A}$ with complex entries such that $r_i(n)={\bf u}_i^T {\bf A}_i^n {\bf v}_i$. Recall that $r_i(n)$ can be written in the eigenvalue expansion $$r_i(n)=\sum_{j=0}^k \alpha_j n^{m_j-1}\lambda_j^n,$$ where $\lambda_1,\ldots,\lambda_k$ are the eigenvalues of ${\bf A}_i$, which we have assumed have arguments in $\pi\mathbb{Q}$, $m_j$ is the multiplicity of $\lambda_j$, and $\alpha_1,\ldots,\alpha_k\in\mathbb{C}$.

Similarly, we can consider a ``common'' eigenvalue expansion of the linear recurrences $r_1,\ldots,r_s$. In particular, let \begin{equation}\label{rhoorder}\rho_1<\rho_2<\cdots<\rho_D\end{equation} be the distinct moduli of the collection of eigenvalues of ${\bf A}_1,\ldots,{\bf A}_s$. Then there exist integers $D$ and $d$ and a finite collection of complex numbers $\{\alpha_{n,i,m,t}\}_{n\geqslant 0,1\leqslant i\leqslant s,1\leqslant m\leqslant D,0\leqslant d}$, many of which may be zero, such that \begin{equation}\label{eigenr}r_i(n)=\sum_{m=1}^D \sum_{t=0}^d \alpha_{n,i,m,t}n^t\rho_m^n.\end{equation} We note that the collection $\{\alpha_{n,i,m,t}\}_{n\geqslant 0,1\leqslant i\leqslant s,1\leqslant m\leqslant D,0\leqslant d}$ is finite precisely because the all of the eigenvalues associated to the linear recurrences have arguments that are rational multiples of $\pi$---this assumption in the statement of the theorem is crucial for this proof to work.

Now consider $F(z):=\sum_{n\geqslant 0}f(n)z^n$, where $f(n)\in\{r_1(n),\ldots,r_s(n)\}$ for each $n\geqslant 0$. Suppose that $F(z)$ is bounded in a sector of its disk of convergence, that $F(z)$ is irrational, and that $D$ is the  minimal such number so that the previous two properties hold. We will prove the theorem by showing that such a minimal $D$ cannot exist.

To this end, set $\Psi(z):=F(z/\rho_D)$, so that the coefficients $\psi(n)$ of $\Psi(z)$ satisfy $\psi(n)=f(n)\rho_D^{-n}$, $\Psi(z)$ is bounded in a sector of the unit circle, and $\Psi(z)$ is irrational. Using \eqref{eigenr} we have that \begin{equation}\label{psipoly}\psi(n)=\alpha_{n,i_n,D,d}\cdot n^d+\alpha_{n,i_n,D,d-1}\cdot n^{d-1}+\cdots +\alpha_{n,i_n,D,0}+r(n),\end{equation} where for each $n$, \begin{equation}\label{riprime}r(n)\in\left\{r_1'(n),\ldots,r_s'(n)\right\}\quad \mbox{with}\quad r_i'(n)=\sum_{m=1}^{D-1} \sum_{t=0}^d \alpha_{n,i,m,t}n^t(\rho_m/\rho_D)^n,\end{equation} and where here we have used the notation $i_n$ to indicate the value of $i\in\{1,\ldots,s\}$ for which $f(n)=r_i(n)$. By \eqref{rhoorder}, \eqref{riprime}, and the minimality of $D$, we have that the function $R(z)=\sum_{n\geqslant 0} r(n)z^n$ is bounded throughout the unit circle (as it has radius of convergence at least $\rho_D/\rho_{D-1}>1$) and is rational. Thus by \eqref{psipoly}, the function $$\Psi(z)-R(z)=\sum_{n\geqslant 0} \left(\alpha_{n,i_n,D,d}\cdot n^d+\alpha_{n,i_n,D,d-1}\cdot n^{d-1}+\cdots +\alpha_{n,i_n,D,0}\right)z^n$$ is bounded in a sector of the unit circle and irrational. But by Tang and Wang's result, the function $\Psi(z)-R(z)$ is rational. This contradiction proves the theorem.
\end{proof}

\section{A question}

The careful reader will notice that while Tang and Wang's result holds for polynomials in $\mathbb{C}[z]$, the linear recurrences in our theorem, while also being complex-valued, must have associated eigenvalues have arguments in $\pi\mathbb{Q}$. This is because if we let the eigenvalues be more general complex numbers, the arguments of the powers of those eigenvalues may not take only finitely many values. Our proof works as long as this finiteness condition holds.

Because of the necessity of this finiteness condition, Theorem \ref{main} is essentially the best possible result that can be proved from the methods of Borwein, Erd\'elyi, and Littmann \cite{BEL2008}, and Tang and Wang \cite{TW2017}. An interesting generalization would be a statement as in {Theorem~\ref{main}} for general complex linear recurrences. To apply the same basic theory, one would need to answer the following question in the affirmative.

\begin{question} Let $a:\mathbb{Z}_{\geqslant 0}\to \mathbb{C}$ be given by $$a(n)=\sum_{i=0}^M b_{i,n} \zeta_i^n,$$ where $b_{i,n},\zeta_i\in B$, $B$ is finite set of complex numbers, and $\zeta_i$ lies on the unit circle for each $i$. Suppose that for each $\varepsilon>0$ there exist a positive integer $q$ and complex numbers $\gamma_0,\ldots,\gamma_{q-1}$ such that for all $n\geqslant 0$, $$|\gamma_0 a(n)+\cdots+\gamma_{q-1}a(n+q-1)+a(n+q)|<\varepsilon.$$ Is it true that the sequence $a$ is linearly recurrent?
\end{question}

%%%%%%%%%%%%%%%%%%%%%%%%%%%%%%%%%%%%%%%%%%%%%%%%%
\bibliographystyle{amsplain}
\providecommand{\bysame}{\leavevmode\hbox to3em{\hrulefill}\thinspace}
\providecommand{\MR}{\relax\ifhmode\unskip\space\fi MR }
% \MRhref is called by the amsart/book/proc definition of \MR.
\providecommand{\MRhref}[2]{%
  \href{http://www.ams.org/mathscinet-getitem?mr=#1}{#2}
}
\providecommand{\href}[2]{#2}

%%%%%%%%%%%%%%%%%%%%%%%%%%%%%%%%%%%%%%%%%%%%%%%%%

\end{document}